\providecommand{\U}[1]{\protect\rule{.1in}{.1in}}
\providecommand{\U}[1]{\protect\rule{.1in}{.1in}}
\newtheorem{theorem}{Theorem}
\begin{document}

\begin{center}
\vspace{0.4in}

{\LARGE {\textbf{Generalized Attracting Horseshoes and Chaotic Strange
Attractors}} }

\vspace{0.2in}

\textsf{Yogesh Joshi}

\textsf{Department of Mathematics and Computer Science}

\textsf{Kingsborough Community College}

\textsf{Brooklyn, NY 11235-2398}

\textsf{yogesh.joshi@kbcc.cuny.edu}

$\ast\;\ast\;\ast$

\textsf{Denis Blackmore}

\textsf{Department of Mathematical Sciences and}

\textsf{Center for Applied Mathematics and Statistics}

\textsf{New Jersey Institute of Technology}

\textsf{Newark, NJ 07102-1982}

\textsf{deblac@m.njit.edu}

$\ast\;\ast\;\ast$

\textsf{Aminur Rahman}

\textsf{Department of Applied Mathematics}

\textsf{University of Washington}

\textsf{arahman2@uw.edu}
\end{center}

\vspace{0.2in}

\noindent\textbf{ABSTRACT:} A generalized attracting horseshoe is introduced
as a new paradigm for describing chaotic strange attractors (of arbitrary
finite rank) for smooth and piecewise smooth maps $f:Q\rightarrow Q$, where
$Q$ is a homeomorph of the unit interval in $\mathbb{R}^{m}$ for any integer
$m\geq2$. The main theorems for generalized attracting horseshoes are shown to
apply to H\'{e}non and Lozi maps, thereby leading to rather simple new chaotic
strange attractor existence proofs that apply to a range of parameter values
that includes those of earlier proofs. In particular, it is shown that the
H\'{e}non map has a chaotic strange attractor for the popular parameter values
$a=1.4$, $b=0.3$, which apparently resolves an open problem.

\bigskip

\noindent\textbf{Keywords:} Generalized attracting horseshoes, Chaotic strange
attractors, Birkhoff--Moser--Smale theory, H\'{e}non attractor, Lozi attractor

\medskip

\noindent\textbf{AMS Subject Classification 2010:} 37D45; 37E99; 92D25; 92D40

\section{Introduction}

Ever since Lorenz \cite{Lor} reported on what were then surprising results of
the numerical simulations of his simplified, mildly nonlinear, 3-dimensional
ODE model of atmospheric flows, chaotic strange attractors have been the
subject of intense investigation by theoretical and applied dynamical systems
researchers. Later 3-dimensional ODE models such as R\"{o}ssler's \cite{Ros}
analytic system and Chua's \cite{CKM} piecewise linear system further
intensified the interest in these intriguing and important types of dynamical phenomena.

Over fifty years of dedicated mathematical, scientific, engineering and
economic research using analytical, computational and experimental methods has
firmly established that the identification and characterization of chaotic
strange attractors is important for both theory and applications (see
\cite{Arr,BRTUZ,CA,Cvit1,GH,HM,HKLN,JB,KH,HLor,Mil1,Mil2,OS,Rob,Ru,Shub,Smale,Wig}%
). However, this tends to be very difficult to achieve rigorously, especially
for continuous dynamical systems, as evidenced by the fact that many
properties of the Lorenz, R\"{o}ssler and Chua attractors that seem clear from
very precise and extensive simulations, have yet to be proven.

As discrete dynamical systems are generally easier to analyze than their
continuous (ODE or PDE) counterparts, their chaotic strange attractors should
be considerably easier to describe rigorously, which likely was part of the
motivation for the development of an approximate Poincar\'{e} map for the
Lorenz system by H\'{e}non \cite{Hen1,Hen2}. The H\'{e}non map is a
(quadratic) polynomial diffeomorphism of the plane with iterates that converge
to what appears to be a boomerang-shaped attractor, as shown in Fig.1, having
the look of a fractal set with a Hausdorff dimension slightly greater than one
(see \cite{Falc}), and it actually rather closely resembles the simulated
Poincar\'{e} map iterates of the Lorenz system for the right choice of a transversal.

\begin{figure}[ptb]
\centering
\includegraphics[width=0.5\textwidth]{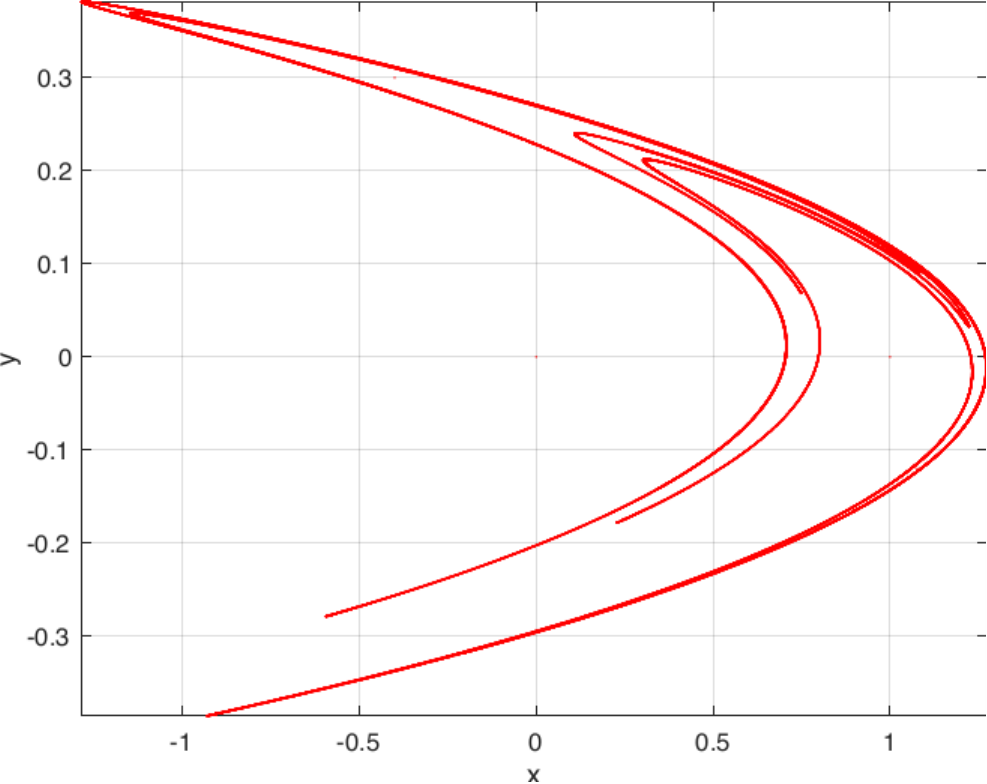}\caption{The H\'{e}non
attractor for $H(x,y)=(1-ax^{2}+y,bx)$ for $a=1.4$, $b=0.3$}%
\label{fig:Hen}%
\end{figure}

Even though the H\'{e}non map dynamics is considerably more amenable to
analysis than the Lorenz equations, proving the existence of a chaotic strange
attractor still turned out to be formidable. Possibly motivated by an interest
in finding an even simpler approximation than H\'{e}non's, Lozi
\cite{Loz1,Loz2} devised his almost everywhere analytic piecewise linear map
that appeared to have a chaotic strange attractor, illustrated in Fig.2,
resembling a piecewise linear analog of the one generated by $H$. But a
rigorous verification of the existence of the putative chaotic strange
attractor for the Lozi map proved to be quite difficult. Finally, in a
pioneering investigation involving a rather lengthy proof, Misiurewicz
\cite{Mis} proved that the Lozi map has a chaotic strange attractor for a
range of the nonnegative parameters $a$ and $b$, which to our knowledge is the
first such rigorous verification for any dynamical system. The H\'{e}non
system proved to be a much tougher nut to crack, and it was not until eleven
years later that Benedicks \& Carleson \cite{BC} proved in an almost herculean
effort involving long, detailed and subtle analysis, that the Henon map has a
chaotic strange attractor for parameters near $a=2$ and $b=0$, respectively.

The fascinating record of dedicated research offers compelling evidence of the
difficulty in proving the existence of strange attractors, even for relatively
simple nonlinear maps. This also includes attractors that simply that display
unusually high orders of complexity such as in \cite{BKNS,Zas} and many of the
books cited in our reference list, and this includes strange (fractal)
attractors that are not chaotic such as in \cite{GOPY}. At any rate, the
almost overwhelming number and diversity of chaotic strange attractors points
to a real need to develop a theory or theories that subsume significant
subclasses of these elusive and consequential dynamical entities; and in this
progress is being made. In recent years, the basic ideas behind the proofs of
the landmark Lozi and H\'{e}non map results in strange attractor theory have
been extended and generalized in terms of a theory of \emph{rank one maps} in
an extraordinary series of papers by Wang \& Young \cite{WY1,WY2,WY3}, the
content of which gives striking confirmation of the exceptional complexity
underlying characterizations of strange attractors for broad classes of
discrete dynamical systems. However, the foundational results of rank one
theory are generally hard to prove, and they tend to be rather difficult to
apply, as for example in Ott \& Stenlund \cite{OS}, which is closely related
to results of Zaslavsky \cite{Zas} and Wang \& Young \cite{WY2}. In light of
this rather daunting rigorous chaotic strange attractor landscape, it is clear
that there is a need for simpler theories and methods for reasonably ample
classes of discrete dynamical systems of significant theoretical and applied
interest. Our hope is that the work in this paper is a useful step in that direction.

\begin{figure}[ptb]
\centering
\includegraphics[width=0.5\textwidth]{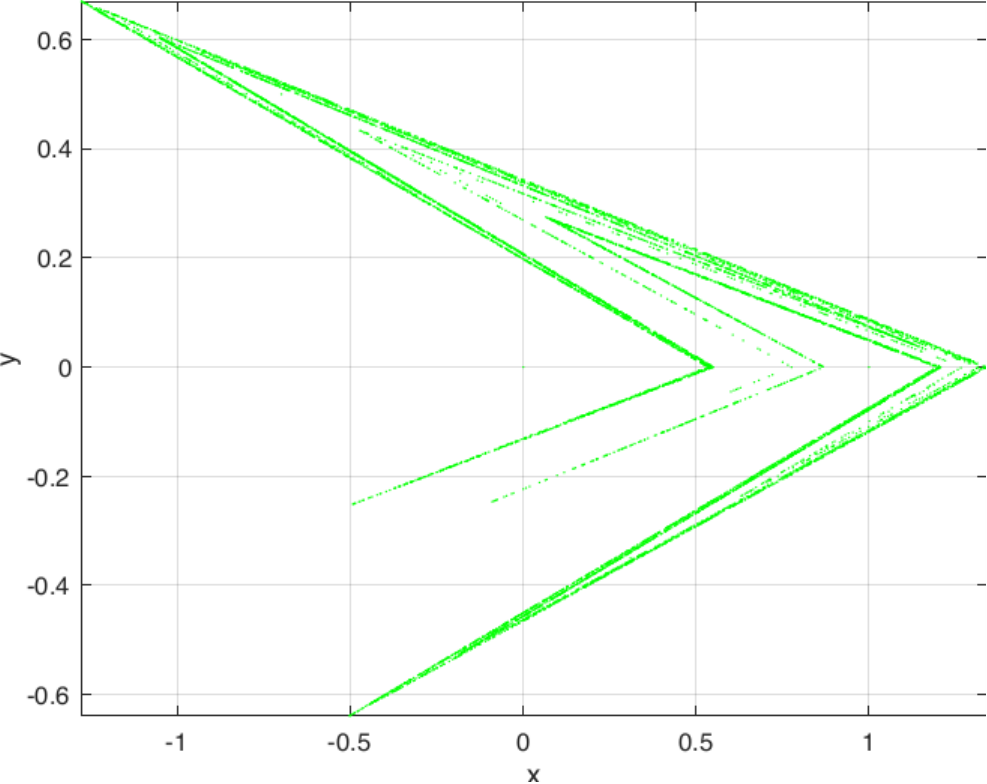}\caption{The Lozi attractor for
$L(x,y)=(1-a\left\vert x\right\vert +y,bx)$ for $a=1.7$, $b=0.5$}%
\label{fig:Loz}%
\end{figure}

The organization of the remainder of this paper is as follows. In Section 2 we
describe some notation and definitions to be used throughout the exposition.
This might seem unnecessary to experts, but since there are a number of
competing definitions that are widely accepted, it is prudent to be very
specific in certain cases. Then, in Section 3, we describe our generalized
attracting horseshoe paradigm and prove the main theorem about
its\emph{\ strange chaotic attractor}. In addition, we show how related maps
are subsumed by the paradigm, which leads to analogous chaotic strange
attractor theorems. Moreover, we extend these results to higher dimensions,
thereby obtaining rank $k$ attractors for any integer $k\geq2$. After this, in
Section 4, we apply the results in Section 3 to the H\'{e}non and Lozi maps to
obtain surprisingly short and efficient chaotic strange attractor existence
theorems for each of these discrete dynamical systems. This includes analyses
of natural extensions of the H\'{e}non and Lozi maps to $\mathbb{R}^{3}$,
focusing on their rank-2 attractors. The exposition concludes in Section 5
with a summary of our results and their impacts, as well as a brief
description envisaged related future work.

\section{Notation and Definitions}

We shall be concerned here primarily with discrete (semi) dynamical systems
generated by the nonnegative iterates of continuous maps
\begin{equation}
f:U\rightarrow\mathbb{R}^{m}, \label{eq1}%
\end{equation}
where $m\geq2$, $U$ is a connected open subset of $\mathbb{R}^{m}$, $f$ is
$C^{1}$ except possibly on finitely many submanifolds of positive codimension
having a union that does not contain any of the fixed points of the map. Our
focus shall be on planar maps ($m=2$), but we are going to consider higher
dimensions in the sequel. More specifically, we are going to concentrate on
maps of the form (\ref{eq1}) having the additional property that the is a
homeomorph of the unit disk $B_{1}(0):=\{\boldsymbol{x}\in$ $\mathbb{R}%
^{m}:\left\Vert \boldsymbol{x}\right\Vert \leq1\}$ contained in $U$, which we
denote as $Q$, such that
\begin{equation}
f\left(  Q\right)  \subset Q. \label{eq2}%
\end{equation}
Employing the usual abuse of notation, we identify the restriction of $f$ to
$Q$ with $f$ itself, so that our primary concern is with the dynamics of the
maps
\begin{equation}
f:Q\rightarrow Q \label{eq3}%
\end{equation}
subject to the above assumptions. We denote this set of maps as $\mathfrak{F}%
^{1}(Q)$ and remark that we have included the possibility of maps that may not
be differentiable on sets of (Lebesgue) measure zero because we are going to
analyze Lozi attractors.

Our aim is to identify and characterize attractors for maps of the form
(\ref{eq3}); in particular attractors that are fractal sets on which the
restricted dynamics exhibit chaotic orbits. For our definition of chaos, we
take the description ascribed to Devaney, which requires sensitive dependence
on initial conditions, density of the set of periodic points and topological
transitivity, keeping in mind that Banks \emph{et al.} \cite{BBCDS} proved
that sensitive dependence is implied by periodic density and transitivity. For
more standard definitions, we refer the reader to
\cite{Arr,Dev,GH,HKLN,KH,Rob,Shub,Wig}.

We are now ready to give a precise definition of a \emph{chaotic strange
attractor} (\emph{CSA}) - our principal object of interest.

\bigskip

\noindent\textbf{Definition} Let $f\in$\noindent$\mathfrak{F}^{1}(Q)$ and
$\mathfrak{A}\subset Q$. Then $\mathfrak{A}$ is a chaotic strange attractor
for $(3)$ if it satisfies the following properties:

\begin{itemize}
\item[.]

\begin{itemize}
\item[(CSA1)] it is a compact, connected,$f$-invariant subset of $Q.$

\item[(CSA2)] $\mathfrak{A}$ is an attractor in the sense that there is an
open set $V$ of $\mathbb{R}^{m}$ such that $A\subset V\cap Q$ and $d\left(
f^{n}(\boldsymbol{x}),\mathfrak{A}\right)  \rightarrow0$ as $n\rightarrow
\infty$ for every $x\in V\cap Q$.

\item[(CSA3)] it is the minimal set satisfying CAS1 and CAS2.
\end{itemize}
\end{itemize}

\section{Attracting Horseshoes and Their Generalizations}

Attracting horseshoes (AH) were introduced in Joshi \& Blackmore \cite{JB} as
a CSA model that can be extended to any finite rank. The CSA for the AH can be
readily shown to be given by
\[
\mathfrak{A}:=\overline{W^{u}(p)}.
\]
As one can plainly see in Fig.3, these AHs are basically the standard Smale
horseshoes described in such treatments as \cite{Arr,DN,GH,KH,Shub,Smale,Wig},
which were also employed by Easton \cite{East} in his work on trellises.
Attracting horseshoes have precisely three fixed points comprising two saddles
points $p$ and $q$ and one sink $r$, such that $f(A)\subset B$, with $B$
contained in the basin of attraction of the sink. It should be noted that
since $f(Q)$ is contained in the interior of $Q$, which is diffeomorphic to a
disk in the plane, it is a simple matter to extend $f$ to a diffeomorphism
$F:\mathbb{R}^{2}\rightarrow\mathbb{R}^{2}$ having $\overline{W^{u}(p)}$ as a
global attractor. The main novelty in \cite{JB} was the focus on attractors,
and especially multihorseshoe chaotic strange attractors produced by AHs for
iterated maps, which are apt to display extraordinary complexity. Inasmuch as
AHs have three fixed points - two saddles and one sink - these models are not
suitable for the analysis of maps such as those of H\'{e}non and Lozi, which
have only two fixed points, both of which are saddle points. It was precisely
this observation that led to our development of the \emph{generalized
attracting horseshoe} (\emph{GAH}).

\begin{figure}[ptb]
\centering
\includegraphics[width=0.6\textwidth]{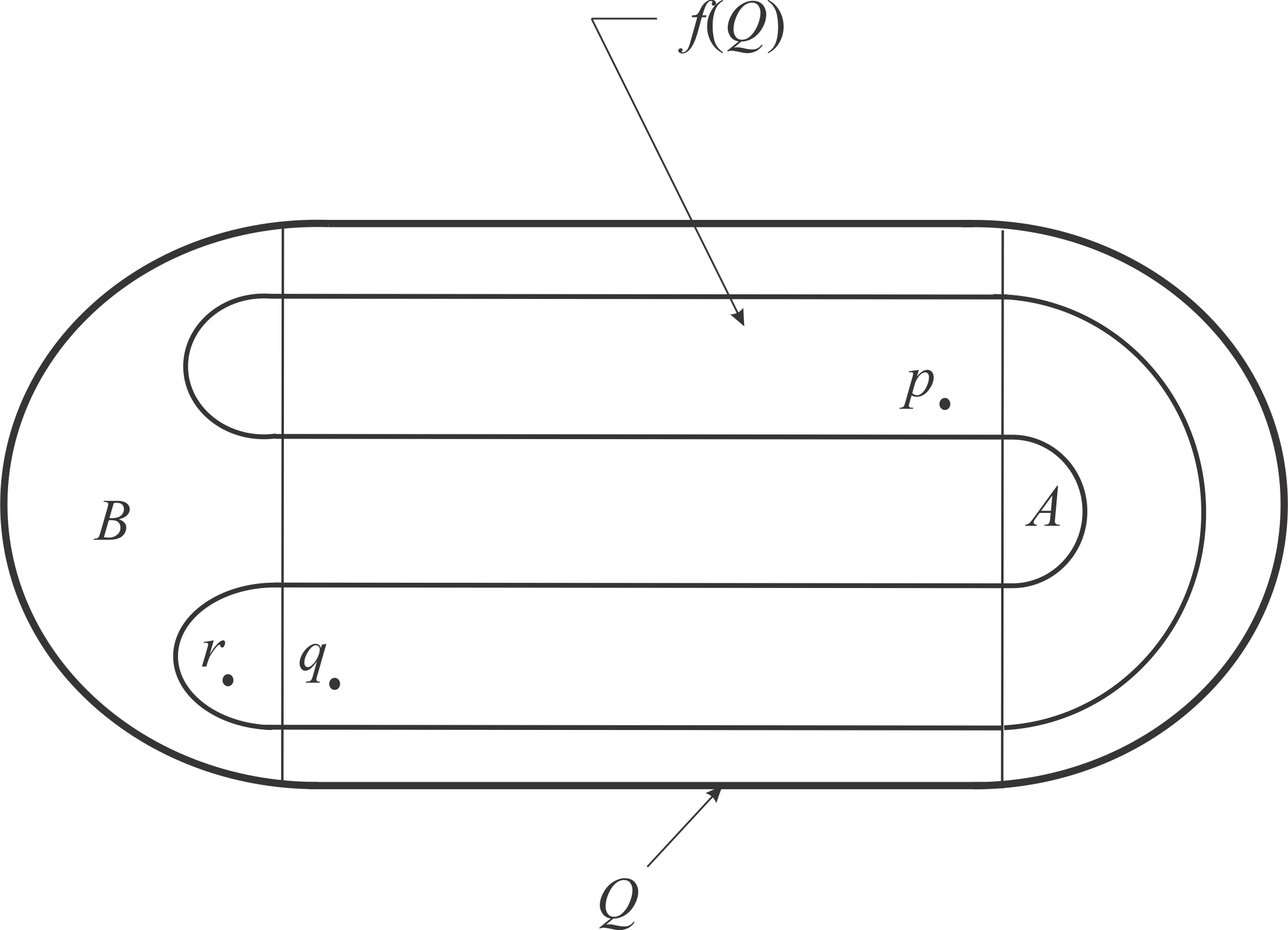}\caption{AH model}%
\label{fig:AH}%
\end{figure}

\subsection{The generalized attracting horseshoe (GAH)}

The GAH is a modification of the AH that can be represented as a geometric
paradigm with either just one or two fixed points, both of which are saddles.
As a result, we shall show that it includes both the H\'{e}non and Lozi maps,
which leads to simple proofs of the existence of CSAs for these maps that are
essentially simple applications of the main theorems that we shall attend to
in this subsection.

Figure 4 shows a rendering of a $C^{1}$ GAH with two saddle points, which can
be constructed as follows: The rectangle is first contracted vertically by a
factor $0<\lambda_{v}<1/2$, then expanded horizontally by a factor
$1<\lambda_{h}<2$ and then folded back into the usual horseshoe shape in such
a manner that the total height and width of the horseshoe do not exceed the
height and width, respectively of the rectangle $Q$. Then the horseshoe is
translated horizontally so that it is completely contained in $Q$. Obviously,
the map $f$ defined by this construction is a member of $\mathfrak{F}^{1}(Q)$
defined in Section 2. Clearly, there are also many other ways to obtain this
geometrical configuration. For example, the map $f$ as described above is
orientation-preserving, and an preserving variant can be obtained by composing
it with a reflection in the horizontal axis of symmetry of the rectangle, or
by composing it with a reflection in the vertical axis of symmetry followed by
a composition with a half-turn. Another construction method is to use the
standard Smale horseshoe that starts with a rectangle, followed a horizontal
composition with just the right scale factor or factors to move the image of
$Q$ into $Q$, while preserving the expansion and contraction of the horseshoe
along its length and width, respectively. Note that the subrectangle $S$ with
its left vertical edge through $p$, which contains the arch of the horseshoe
and the keystone region $K$, is to play a key role in our main theorems, which follow.

\begin{theorem}
\label{thm1} Let $f:Q\rightarrow Q$ be the member of $\mathfrak{F}^{1}(Q)$
representing the GAH paradigm with a horseshoe-like image as shown in Fig. 4.
Then if $f$ satisfies the additional property

\medskip

\noindent$(T)$ f maps the keystone region $K$ $($containing a portion of the
arch of the horseshoe$)$ to the left of the fixed point $p$, i.e. $f(K)\subset
Q\smallsetminus S=\{(x,y)\in Q:x<x(p)\}$,
\[
\mathfrak{A}:=\overline{W^{u}(p)}%
\]
is a CSA.
\end{theorem}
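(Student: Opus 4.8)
The plan is to establish that $\mathfrak{A} := \overline{W^u(p)}$ satisfies the three defining properties (CSA1)--(CSA3) of a chaotic strange attractor, exploiting the hyperbolic structure imposed by the GAH construction together with the extra trapping hypothesis $(T)$. First I would set up the hyperbolic splitting: the vertical contraction by $\lambda_v \in (0,1/2)$ and horizontal expansion by $\lambda_h \in (1,2)$ furnish, on the linear part of the map over the portion of $Q$ where $f$ acts affinely, a stable direction (vertical) and an unstable direction (horizontal), so that the saddle fixed point $p$ has a one-dimensional unstable manifold $W^u(p)$ and a one-dimensional stable manifold $W^s(p)$. I would verify that $p$ is genuinely a saddle from the eigenvalue data, and identify $W^u(p)$ as the curve emanating from $p$ along the expanding direction.

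Next I would prove (CSA1), that $\mathfrak{A}$ is compact, connected, and $f$-invariant. Compactness is immediate since $\mathfrak{A}$ is a closed subset of the compact set $Q$ (using $f(Q)\subset Q$). Invariance $f(\mathfrak{A}) = \mathfrak{A}$ follows because $W^u(p)$ is $f$-invariant by definition of an unstable manifold, and closure commutes with the continuous, proper map $f$ restricted to the trapping region; the key point is that hypothesis $(T)$ guarantees the arch region $S$ is mapped to the left of $p$, which prevents the unstable manifold from escaping and forces its forward images to accumulate within $Q$, so that $\overline{W^u(p)}$ is forward invariant. Connectedness holds because $W^u(p)$ is a continuous image of a line (hence connected) and the closure of a connected set is connected.

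For (CSA2) I would show that $\mathfrak{A}$ attracts an open neighborhood. Here the trapping condition $(T)$ is decisive: because $f(S)$ lands strictly to the left of $p$ inside $f(Q)$, the horseshoe geometry forces the nested intersection $\bigcap_{n\geq 0} f^n(Q)$ to collapse, in the contracting (vertical) direction, onto a set whose horizontal structure is precisely the closure of the unstable manifold. I would take $V$ to be an open neighborhood of $Q$ in $\mathbb{R}^m$ and argue, via the uniform contraction $\lambda_v < 1/2$ on the stable fibers, that $d(f^n(\boldsymbol{x}), \mathfrak{A}) \to 0$ for every $\boldsymbol{x} \in V \cap Q$; the $\lambda_v < 1/2$ bound ensures the images of horizontal strips shrink fast enough that the limit set is exactly $\overline{W^u(p)}$ and no stray accumulation occurs outside it. Minimality (CSA3) then follows because any closed invariant attracting set must contain $p$, hence contain $W^u(p)$ (invariance plus the dynamics pushing along the unstable direction), hence contain its closure.

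Finally I would establish the chaotic dynamics on $\mathfrak{A}$ in the sense of Devaney, invoking the result of Banks \emph{et al.}~\cite{BBCDS} so that it suffices to prove density of periodic points and topological transitivity. The horseshoe structure yields a semiconjugacy (or conjugacy on the maximal invariant set) with a subshift of finite type on the symbol space, from which transitivity and density of periodic orbits are inherited in the standard way; condition $(T)$ is what guarantees that this symbolic dynamics survives after passing to the attractor rather than merely to a transient invariant Cantor set. I expect the main obstacle to be precisely this last identification --- showing rigorously that $\overline{W^u(p)}$ coincides with the maximal invariant set carrying the full symbolic dynamics, rather than a proper subset of it, and that the folding plus the trapping hypothesis $(T)$ together force transitivity on the \emph{entire} closure of the unstable manifold. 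Controlling the behavior near the fold (the arch in $S$), where the map is most strongly nonlinear and the hyperbolic estimates are delicate, is the technical crux on which the whole argument rests.
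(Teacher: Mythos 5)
Your treatment of (CSA1)--(CSA3) is broadly consistent with what the paper asserts (the paper simply states that the construction makes $\overline{W^{u}(p)}$ a compact attractor and does not elaborate, so your more detailed sketch of invariance, connectedness and the collapse of $\bigcap_{n\geq0}f^{n}(Q)$ onto the closure of the unstable manifold is reasonable). The genuine gap is in the part that carries the real weight of the theorem: showing that $\mathfrak{A}$ is fractal and that the dynamics on it are chaotic. You propose to read the symbolic dynamics off ``the horseshoe structure'' of $f$ on $Q$ via the standard Smale semiconjugacy with a subshift, and you assign to hypothesis $(T)$ the vague role of ensuring that this symbolic dynamics ``survives after passing to the attractor.'' That cannot work as stated: in the GAH the entire image $f(Q)$ is contained in $Q$ (it is a trapping region, translated so the whole horseshoe sits inside the rectangle), so $f(Q)\cap Q$ does not consist of the two full-width crossing strips that the classical Smale horseshoe construction requires, and $f$ itself carries no such subshift on $Q$. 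You in fact flag exactly this identification as ``the main obstacle'' and ``the technical crux'' and then leave it unresolved --- which is precisely where the central idea of the proof is missing.

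The paper's mechanism is different, and it is exactly what $(T)$ is for. Since $S$ is the subrectangle to the right of $p$ containing the arch, and $(T)$ forces $f(S)$ to land strictly to the left of $p$, the branch of $W^{u}(p)$ that runs out through the arch is thrown back across the (vertical) local stable manifold of $p$. Consequently a strip around the unstable manifold of $f^{2}$ at $p$ completely crosses a strip around the stable manifold of $f^{2}$ at $p$: the second iterate has a homoclinic crossing at $p$, possibly non-transverse. The Birkhoff--Moser--Smale homoclinic theorem (in the slightly generalized, not-necessarily-transverse form cited from \cite{JB}) then yields the embedded shift dynamics and the local Cantor-set-times-interval structure of $\overline{W^{u}(p)}$. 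In short, the horseshoe that produces the chaos lives in a neighborhood of $p$ for $f^{2}$, not on all of $Q$ for $f$; supplying that observation is what closes the gap in your argument.
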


\begin{proof}It follows from the construction that $\overline{W^{u}(p)}$ is a
compact attractor, so it remains to prove that it is strange (fractal) and has chaotic orbits.
But this is precisely where the property $(T)$ comes into play. For it guarantees that a strip
(tubular neighborhood) around the unstable manifold of $f^2$ at $p$ completely crosses a strip
around the stable manifold of $f^2$ at $p$ as shown in Fig. 5, which presents a magnified picture
of the image $f^2\left( Q\right)$ in a neighborhood of $p$.
Hence, we conclude from the well-known results of Birkhoff--Moser--Smale (\emph{cf.} \cite{Moser,Smale,Wig}),
and a slight generalization for the crossing of stable and unstable manifolds is not necessarily transverse
(see, \emph{eg.} \cite{JB}), that the attractor locally has the structure of the Cartesian product of a Cantor
middle-third set and an interval, and exhibits (symbolic) shift map chaotic dynamics.
\end{proof}

\begin{figure}[tbh]
\centering
\includegraphics[width=0.5\textwidth]{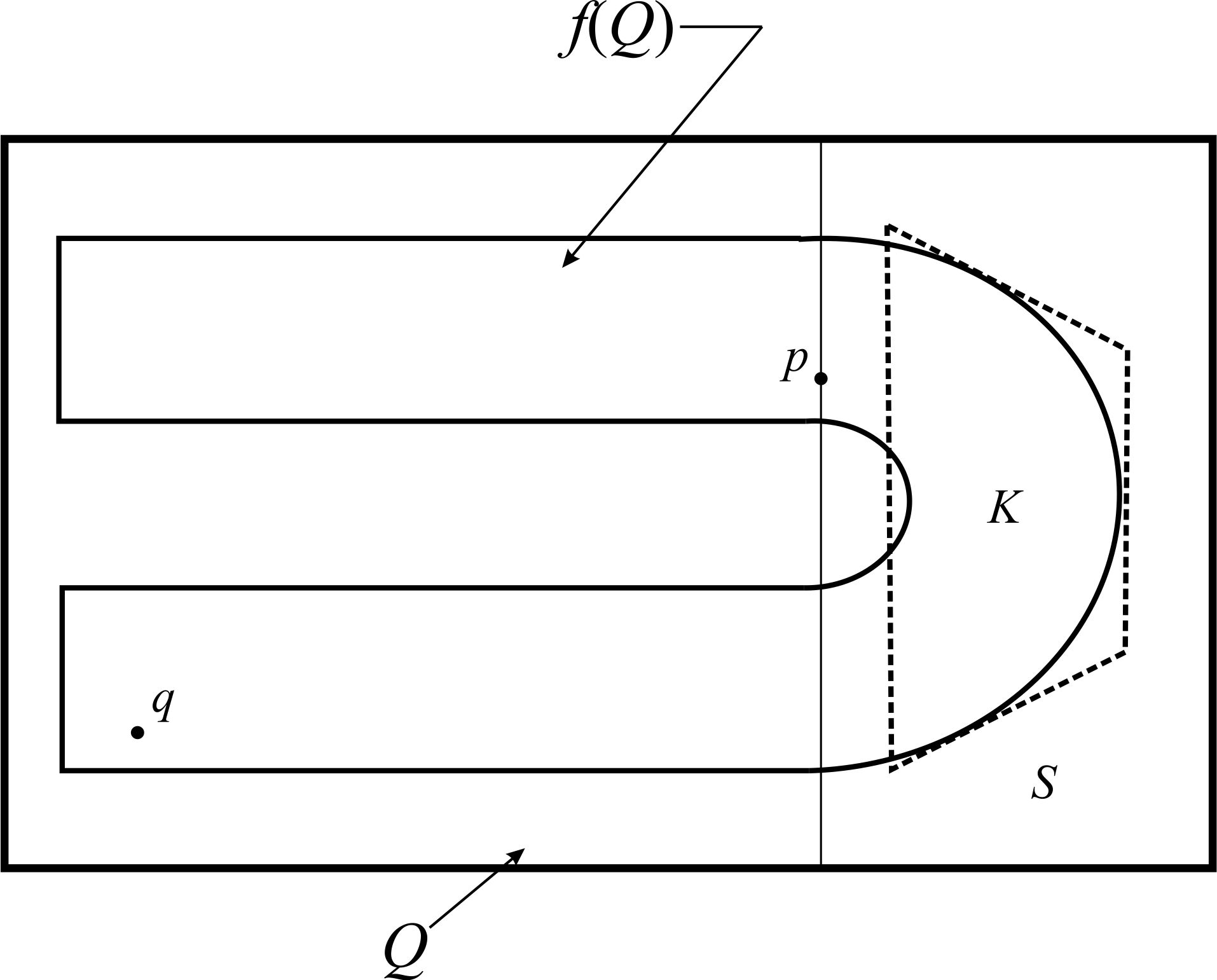}\caption{A planar GAH with two
saddle points}%
\label{fig:GAH}%
\end{figure}

Theorem \ref{thm1} can readily be adapted to cover even more general
horseshoe-like maps in $\mathfrak{F}^{1}(Q)$, with virtually the same proof.

\begin{theorem}
\label{thm2} Let $f:Q\rightarrow Q$ be any member of $\mathfrak{F}^{1}(Q)$
with a horseshoe-like image with a keystone region $K$ containing a portion of
the arch of $f(Q)$ analogous to that shown in Fig.4. Suppose that the map is
expanding by a scale factor uniformly greater than one along the length of the
horseshoe and contracting transverse to it by a scale factor uniformly less
than one-half in the complement of a subset of Q containing K. Then if $f$
satisfies the additional property

\medskip

\noindent$(\tilde{T})$ f maps K into an open subset of Q to the left of the
saddle point p,\smallskip%
\[
\mathfrak{A}:=\overline{W^{u}(p)}%
\]
\smallskip\noindent is a CSA.
\end{theorem}

\begin{proof}This follows mutatis mutandis from the proof of Theorem \ref{thm1}.
\end{proof}

\begin{figure}[hbt]
\centering
\includegraphics[width=0.5\textwidth]{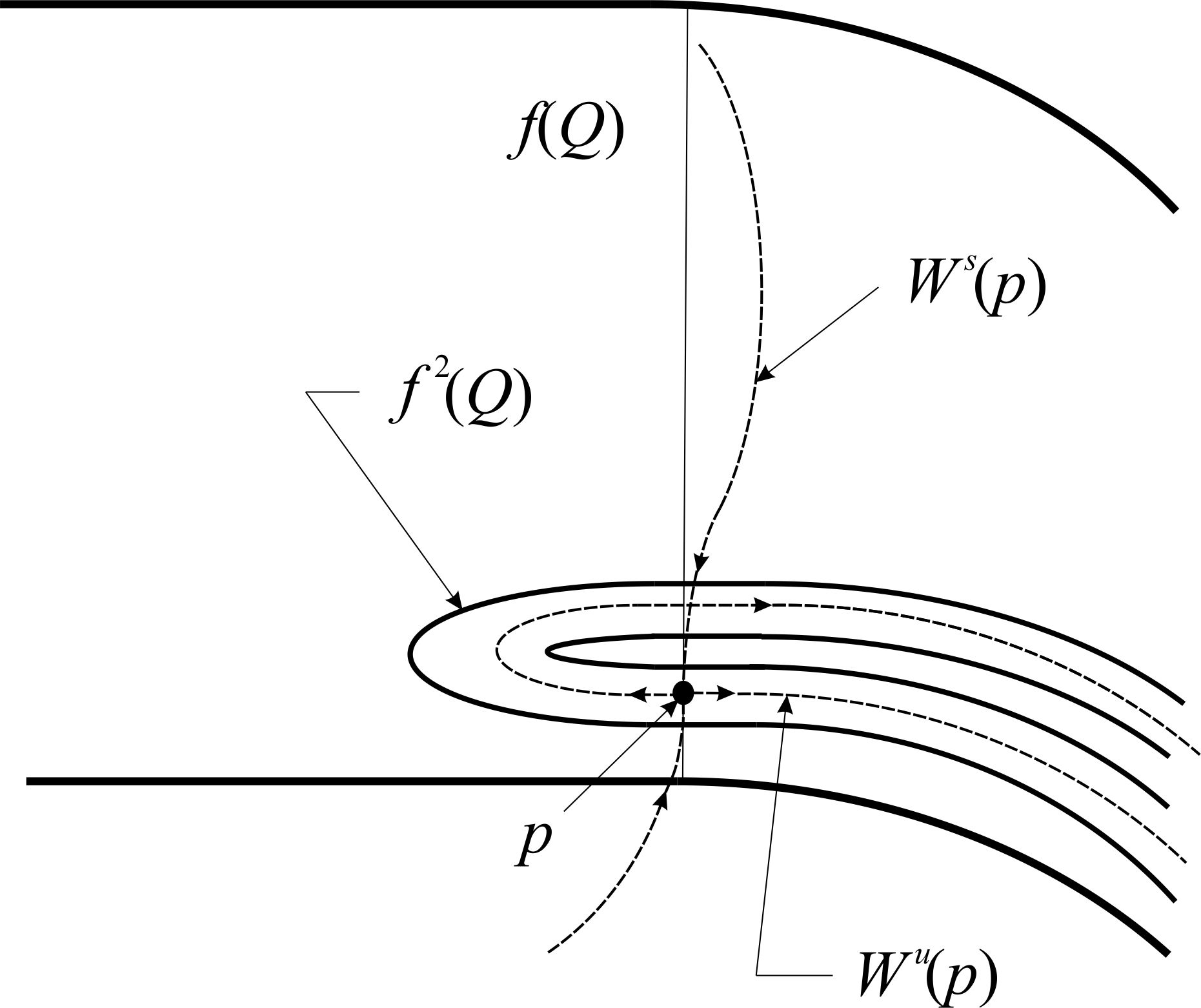}\caption{Local (transverse)
horseshoe structure of $f^{2}$ near $p$}%
\label{fig:f2p}%
\end{figure}

\subsection{Higher dimensional GAH paradigms}

The planar GAH model can be extended to any finite dimension to produce CSAs
of any rank. For example, this can be accomplished inductively by composing
the paradigm with the model map in successive coordinate planes formed by the
contracting coordinate and each new (expanding) coordinate direction that is
added. For demonstration purposes, it suffices to show how to go from a
1-dimensional unstable manifold to a 2-dimensional unstable manifold in
$\mathbb{R}^{3}$.

We may write the planar GAH in the form
\begin{equation}
f(x,y):=\left(  u(x,y),v(x,y)\right)  , \label{eq4}%
\end{equation}
which we extend to Euclidian 3-space as
\begin{equation}
f_{1}(x,y,z):=\left(  f(x,y),z\right)  =\left(  u(x,y),v(x,y),z\right)  .
\label{eq5}%
\end{equation}
Then, holding $x$ fixed and applying the planar model map in the $z$-$y$
coordinate plane corresponds to the mapping
\begin{equation}
f_{2}(x,y,z):=\left(  x,v(z,y),u(z,y)\right)  . \label{eq6}%
\end{equation}
Therefore, the desired extension is
\begin{equation}
f_{3}(x,y,z):=f_{2}\circ f_{1}(x,y,z)=f_{2}\left(  u(x,y),v(x,y),z\right)
=\left(  u(x,y),v\left(  z,v(x,y)\right)  ,u\left(  z,v(x,y)\right)  \right)
. \label{eq7}%
\end{equation}
Higher dimensional GAH paradigms and, more generally, GAH models of the type
covered in Theorem \ref{thm2} of any finite dimension can be created by
successive applications of the inductive step described above, which allows to
construct GAHs of any rank. To visualize the nature of the image of $f_{3} $,
picture a thickened plane perpendicular to the $z$-axis that is first folded
quite sharply along the $x$-axis and then folded rather more gently along the
$z$-axis.

\section{Applications to the H\'{e}non and Lozi Maps}

The existence of CSAs for the H\'{e}non and Lozi maps now turn out to be
direct corollaries of Theorem \ref{thm2}. We consider the H\'{e}non map
\begin{equation}
H(x,y):=\left(  1-ax^{2}+y,bx\right)  \label{eq8}%
\end{equation}
for a small parameter neighborhood of $(a,b)=(1.4,0.3)$ and the Lozi map
\begin{equation}
L(x,y):=\left(  1-\alpha\left\vert x\right\vert +y,\beta x\right)  \label{eq9}%
\end{equation}
in a parameter neighborhood of $(\alpha,\beta)=(1.7,0.5)$ that their
(apparent) respective CSAs at $(a,b)=(1.4,0.3)$ and $(\alpha,\beta)=(1.7,0.5)$
are illustrated in Fig. 1 and Fig. 2, respectively.

Our first result is for the H\'{e}non map (\ref{eq8}), which has a
quadrilateral trapping region $Q$, first observed in \cite{Hen2}, with
vertices at $A$: (-1.33,0.42), $B$: (1.32,0.133), $C$: (1.245,-0.14) and $D$:
(-1.06,-0.5). The map has just two fixed points, $p$ and $q$, both of which
are saddles. The fixed points are (approximately):
\[
p=(0.631354477,0.189406)\ \text{and}\ q=(-1.131354477,-0.339406343),
\]
and we note that only $p\in Q$.

\begin{theorem}
\label{thm3}The H\'{e}non map $(\ref{eq8})$ has a CSA given by
\[
\mathfrak{A}:=\overline{W^{u}(p)}%
\]
in a sufficiently small parameter neighborhood of $(a,b)=(1.4,0.3)$.
\end{theorem}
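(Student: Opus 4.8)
The plan is to realize the H\'{e}non map $H$ at $(a,b)=(1.4,0.3)$ as a member of $\mathfrak{F}^{1}(Q)$ satisfying every hypothesis of Theorem~\ref{thm2}, so that the conclusion $\mathfrak{A}=\overline{W^{u}(p)}$ follows at once, and then to promote the single-parameter statement to a neighborhood by noting that each hypothesis is an open condition. First I would locate the fixed points by solving $x=1-ax^{2}+bx$, obtaining $x_{\pm}=\bigl((b-1)\pm\sqrt{(1-b)^{2}+4a}\bigr)/(2a)$, and take $p=(x_{+},bx_{+})$ as the relevant saddle. A short computation with $DH=\begin{pmatrix}-2ax&1\\ b&0\end{pmatrix}$ (constant determinant $-b$, so $H$ is uniformly area-contracting by the factor $b=0.3$) gives eigenvalues $\lambda=-ax\pm\sqrt{a^{2}x^{2}+b}$; at $p$ these are $|\lambda_{u}|\approx1.92>1$ and $|\lambda_{s}|\approx0.16<\tfrac12$, so $p$ is a saddle whose contraction rate already sits well below the one-half threshold demanded by Theorem~\ref{thm2}.

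Next I would construct an explicit boomerang-shaped quadrilateral $Q$, homeomorphic to the disk, whose boundary is carried strictly into its interior: the parabolic first coordinate $1-ax^{2}+y$ supplies the vertical fold while the linear second coordinate $bx$ supplies strong transverse contraction, so that $H(Q)$ is the characteristic horseshoe-like arch contained in $Q$, giving $H\in\mathfrak{F}^{1}(Q)$ with $H(Q)\subset Q$. On this region I would set up an invariant cone field around the unstable direction and verify the two quantitative hypotheses of Theorem~\ref{thm2}: that vectors in the unstable cone are stretched by a factor uniformly greater than one along the length of the horseshoe, and that vectors in the complementary stable cone are contracted by a factor uniformly less than one-half off the arch $S$ (the latter is easy here, since the uniform area contraction by $0.3$ forces the transverse rate below $1/2$ wherever any expansion at all is present). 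With $S$ identified as the subregion carrying the fold, I would then verify property $(\tilde{T})$: that $H(S)$ lies strictly to the left of the line $x=x(p)$ inside $H(Q)$. This last step is the geometric heart of the matter, as it is exactly what forces the $f^{2}$-horseshoe of Fig.~\ref{fig:f2p} and hence the Cantor-set-times-interval structure and shift dynamics on $\overline{W^{u}(p)}$.

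The main obstacle I anticipate is the uniform expansion estimate along the length of the horseshoe. Unlike the piecewise-linear Lozi map, whose derivative is locally constant away from the fold, the H\'{e}non map has a genuine critical region near the fold where $DH$ rotates the unstable direction toward the contracting one and hyperbolicity degrades; consequently the expansion factor cannot be bounded below by a constant exceeding one on all of $Q$, which is precisely the phenomenon underlying the nonuniform hyperbolicity exploited by Benedicks \& Carleson. This is exactly why Theorem~\ref{thm2} imposes the transverse-contraction bound only in the complement of the arch and relegates the arch to the qualitative condition $(\tilde{T})$; the delicate point is therefore to choose $Q$ and the cone field so that the set on which expansion could fail is swallowed by the arch $S$, and then to control $S$ purely through $(\tilde{T})$ rather than through derivative estimates.

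Finally, because the trapping inclusion $H(Q)\subset Q$, the strict cone-field inequalities, and the strict inequality in $(\tilde{T})$ (the image of $S$ landing a positive distance to the left of $x(p)$) are all open conditions in the $C^{1}$ topology that depend continuously on $(a,b)$, they persist for every parameter in a sufficiently small neighborhood of $(1.4,0.3)$. Applying Theorem~\ref{thm2} at each such parameter then yields that $\mathfrak{A}=\overline{W^{u}(p)}$ is a CSA throughout that neighborhood, which is the assertion of the theorem.
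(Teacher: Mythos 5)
Your approach is exactly the paper's: exhibit a quadrilateral $Q$ on which the H\'{e}non map belongs to $\mathfrak{F}^{1}(Q)$, check the hypotheses of Theorem~\ref{thm2}, and invoke that theorem, with openness in $(a,b)$ giving the parameter neighborhood. In fact you go considerably further than the paper, whose entire proof is the assertion that this verification is ``straightforward''; none of the fixed-point, eigenvalue, cone-field, or trapping-region details you outline appear there. It is also worth noting that the obstacle you flag --- that uniform expansion along the length of the horseshoe cannot hold near the fold, where $DH$ degrades hyperbolicity (the very phenomenon behind Benedicks--Carleson) --- is a genuine issue with verifying the hypotheses of Theorem~\ref{thm2} for the H\'{e}non map that the paper's one-sentence proof does not address; your proposed remedy (arranging $Q$ and the cone field so that the critical region is absorbed into the arch $S$ and controlled only via $(\tilde{T})$) is a substantive addition rather than a restatement of anything in the paper.
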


\begin{proof}
It is straightforward to prove that the trapping quadrilateral $Q$, with
vertices $A$, $B$, $C$ and $D$ given above, is such that (\ref{eq8}) is a
member of $\mathfrak{F}^{1}(Q)$ and it satisfies the hypotheses of Theorem
\ref{thm2} for a sufficiently small neighborhood of $(a,b)=(1.4,0.3)$. We
shall now verify this, leaving some of the routine details to the reader. To
begin, we show that the map satisfies property $(\tilde{T})$, which entails
proving that a properly chosen keystone set is mapped to the left of the fixed
point $p$. First we find the extremes of the intersection of $H(Q)$ with the
$x$-axis. Owing to the nature of the map, this can be done by computing the
images of the endpoints of the vertical line corresponding to $x=0$ in $Q$. It
is convenient to denote the upper and lower endpoints by $a_{+}$ and $a_{-}$,
respectively. The endpoint $a_{+}$ is the intersection point of the $y$-axis
and the edge $AB=(1-t)A+tB$ $(0\leq t\leq1)$, which we compute by solving the
following equations for $t_{+}$ and then $y:=y(a_{+})$:%
\begin{align*}
(1-t_{+})(-1.33)+t_{+}(1.32)  &  =0\Longrightarrow t_{+}=\frac{1.33}%
{2.65},\;(1-t_{+})(0.42)+t_{+}(0.133)=\\
0.42-0.287t_{+}  &  =\frac{0.73129}{2.65}=y\approx0.276\Longrightarrow
a_{+}\approx(0,0.276).
\end{align*}
Similarly, $a_{-}$ is the point of intersection of the $y$-axis and the edge
$DC=(1-t)D+tC$ $(0\leq t\leq1)$, which we determine by solving%
\begin{align*}
(1-t_{-})(-1.06)+t_{-}(1.245)  &  =0\Longrightarrow t_{-}=\frac{1.06}%
{2.305},\;(1-t_{-})(-0.5)+t_{-}(-0.14)=\\
-0.5+0.36t_{-}  &  =-\frac{0.7709}{2.305}=y(a_{-})\approx
-0.3344\Longrightarrow a_{-}\approx(0,-0.3344).
\end{align*}
Whence, we compute that $H(a_{+})\approx(1.276,0)$ and $H(a_{-})\approx
(0.6655,0)$, and more precisely that the intersection of $H(Q)$ with the
$x$-axis, comprising the `centerline' of the arch of the horseshoe, is a
closed interval contained in $(0.6655,1.276)\times\{0\}$. Consequently, we may
choose the keystone $K$ so that it is completely contained in the subset
$Q_{K}$ of $Q$ consisting of points $(x,y)$ with $x\geq x_{K}:=0.65$. It is
easy to see that from (\ref{eq8}) and the definition of the trapping
quadrilateral $Q$, that in order to show that $H(Q_{K})$ lies to the left of
the saddle point $p$, it suffices to show that he intersection point of the
vertical line $x=x_{K}$ with the edge $AB$, which we denote by $z$, is mapped
to the left of $p$ by $H$.
To compute $z$, we solve the equations%
\begin{align*}
(1-t_{z})(-1.33)+t_{z}(1.32)  &  =0.65\Longrightarrow t_{z}=\frac{1.98}%
{2.65},\;(1-t_{z})(0.42)+t_{z}(0.133)=\\
0.42-0.287t_{z}  &  =\frac{0.54474}{2.65}=y(z)\approx0.2056\Longrightarrow
z\approx(0.65,0.2056).
\end{align*}
Hence, $H(z)\approx\left(  1-1.4(0.65)^{2}+0.2056,0.3(0.65)\right)
=(0.6141,0.195)$. More precisely, the $x$-coordinate of $H(z)$ is strictly
less than $0.62<0.63<x(p)$, which verifies property $(\tilde{T}).$
Next, we shall show that $H$ is expanding along the length of the horseshoe by
focusing on the foliation of $Q$ along its width by the family of line
segments for $0\leq s\leq1$ defined as
\[
l_{s}:=\left(  -1.33+0.27s+2.65t-0.345st,0.42-0.92s-0.287t+0.647st\right)
\;(0\leq t\leq1).
\]
Note that $l_{0}$ and $l_{1}$ correspond to the edges $AB$ and $DC$,
respectively. Moreover, as the left and right endpoints of $l_{s}$ are,
respectively, $(-1.33+0.27s,0.42-0.92s)$ and $(1.32-0.075s,0.133-0.273s)$, it
follows that the length of $l_{s}$ is%
\[
\mathcal{L}\left(  l_{s}\right)  =\left(  7.104869-2.199878s+0.537634s^{2}%
\right)  ^{1/2},
\]
which is a strictly decreasing function of $s$ for $0\leq s\leq1$ such that
$2.3<\mathcal{L}\left(  l_{s}\right)  <2.7$. The images $H\left(
l_{s}\right)  $ are parabolas that foliate $H(Q)$, and we shall prove the
expansiveness along the length by showing that
\[
\mathcal{L}H\left(  l_{s}\right)  >3.2
\]
for all $s\in\lbrack0,1]$.
The Euclidean equation of each of the lines can be readily shown to have the
form%
\[
l_{s}:y=\alpha(s)+m(s)x,
\]
where the slope is
\[
m(s):=\frac{0.647s-0.287}{2.65-0.345s}%
\]
and%
\[
\alpha(s):=0.42+0.92s+m(s)(1.33-0.27s).
\]
It is useful to note that
\[
\left\vert m(s)\right\vert <0.1562
\]
and $\alpha$ is a strictly decreasing function of $s$ satisfying%
\[
0.275<\alpha(s)<1.51
\]
for all $0\leq s\leq1$. Whence, it is straightforward to show using%
\[
\xi=1-1.4x^{2}+y,\;\eta=0.3x,
\]
that the image $H\left(  l_{s}\right)  $ has, for each $s\in\lbrack0,1]$, the
following representation in the $\xi,\eta$-plane:%
\[
\xi-\left(  1+\alpha(s)+\frac{35m^{2}}{196}\right)  =-\frac{140}{9}\left(
\eta-\frac{3m}{28}\right)  ^{2},
\]
which is a parabola with axis of symmetry $\eta=3m/28$ - a horizontal line
within a distance of $0.017$ of the $\xi$-axis for all $0\leq s\leq1.$
Furthermore, since it is easy to show that
\[
H\left(  AD\right)  ,H\left(  BC\right)  \subset Q_{(-1)}:=\{(x,y)\in
Q:x<-1\},
\]
which implies that each of the above parabolic arcs have endpoints in
$Q_{(-1)}$. Moreover, all of the parabolas lie between the extremes%
\[
H\left(  l_{0}\right)  :\xi-1.278052=-\frac{140}{9}\left(  \eta
+0.011604\right)  ^{2}%
\]
and
\[
H\left(  l_{1}\right)  :\xi-0.669909018=-\frac{140}{9}\left(  \eta
-0.016733808\right)  ^{2}%
\]
so their maximal $\xi$ values are all greater than $0.6.$ Hence, the positions
of their endpoints implies that $\mathcal{L}H\left(  l_{s}\right)  >3.2$ for
all $s\in\lbrack0,1]$, which proves that the map is expanding along the horseshoe.
One nice feature of the geometric argument above is that it can also be used
to verify the necessary transverse contraction property. Indeed, it is easy to
show that $H$ is contracting by a factor of absolute value less than $0.5$
along the vertical in $\xi,\eta$-plane in the complement of the `keystone' set
described in the first part of our proof. Thus, the existence of the CSA for
the map (\ref{eq8}) with $(a,b)=(1.4,0.3)$ of the specified type is guaranteed
by Theorem \ref{thm2}. Finally, noting that all of the elements used in the
above argument, including the vertices of the trapping set, the position and
type of the fixed points and the vertices of the keystone set are all
continuous functions of the parameters in a sufficiently small neighborhood of
$(a,b)=(1.4,0.3)$, we obtain the desired result and the proof is complete.
\end{proof}

\noindent We note here that the above result appears to solve an open problem
concerning the existence of a chaotic strange attractor for the specified
parameter values. Regarding the fractal nature of the CSA, numerical
simulation methods have been used in \cite{RHO} to show that the Hausdorff
dimension of the H\'{e}non attractor for $(a,b)=(1.4,0.3)$ is approximately
equal to $1.26$.

The existence proof for the Lozi map (\ref{eq9}) also follows with ease. It
can be readily shown to have a triangular trapping region $Q$ with vertices at
$A$: (-1.28392467,0.671742549), $B$: (1.3434851,0) and $C$:
(-0.51092939,-0.641962335), and just two fixed points, both saddles; namely,%

\[
p=(0.4545454,0.2272727)\ \text{and}\ q=(-0.8333333,-0.4166666),
\]
with only $p \in Q$.

\begin{theorem}
\label{thm4}The Lozi map $(\ref{eq9})$ has a CSA given by
\[
\mathfrak{A}:=\overline{W^{u}(p)}%
\]
in a sufficiently small parameter neighborhood of $(\alpha,\beta)=(1.7,0.5)$.
\end{theorem}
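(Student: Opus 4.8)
The plan is to follow the template used for the H\'{e}non map in Theorem~\ref{thm3}: I would exhibit an explicit quadrilateral $Q$ that the Lozi map $L$ carries into itself, confirm that $L\in\mathfrak{F}^{1}(Q)$, check that $L$ satisfies the geometric hypotheses of Theorem~\ref{thm2}, and then invoke that theorem to conclude that $\overline{W^{u}(p)}$ is a CSA. The piecewise-linear structure of $L$ should make each verification considerably more transparent than in the H\'{e}non case, since the image under $L$ of any line segment is again a (possibly bent) line segment, so that $L(Q)$ is an honest polygon.

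First I would construct the trapping region. Because $L$ is affine on each of the two half-planes $\{x>0\}$ and $\{x<0\}$, the image $L(Q)$ of a quadrilateral $Q$ is a polygon whose only bend occurs along the fold line $x=0$; consequently the inclusion $L(Q)\subset Q$ reduces to checking the images of the finitely many vertices of $Q$ together with the fold points on its edges, and then invoking convexity of the two affine restrictions. Guided by Misiurewicz's original choice of trapping quadrilateral, I would take the vertices of $Q$ to bracket the saddle fixed point $p$ that lies in the right half-plane $\{x>0\}$, and verify the strict containment by direct substitution at the target parameter values. Since the inclusion is strict and $L$ depends continuously on $(\alpha,\beta)$, the same containment persists throughout a sufficiently small parameter neighborhood.

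Next I would verify the expansion and contraction bounds demanded by Theorem~\ref{thm2}. On $\{x>0\}$ the derivative is the constant matrix $\bigl(\begin{smallmatrix}-\alpha & 1\\ \beta & 0\end{smallmatrix}\bigr)$, whose eigenvalues solve $\lambda^{2}+\alpha\lambda-\beta=0$; at the target parameters this yields one unstable eigenvalue of modulus uniformly greater than one and one stable eigenvalue of modulus uniformly less than one-half, with the mirror statement holding on $\{x<0\}$. This gives at once the uniform stretching along the length of the horseshoe and the uniform transverse contraction by a factor below one-half, and it exhibits $p$ as a saddle. Because these eigenvalue inequalities are strict and the parameters enter the characteristic polynomial continuously, they remain valid under small perturbations of $(\alpha,\beta)$.

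The geometric crux, and the step I expect to be the main obstacle, is the verification of property $(\tilde{T})$: that the image $L(S)$ of the arch-containing subregion lies entirely to the left of $p$ inside $L(Q)$. Here I would track the image of the fold segment $\{x=0\}\cap Q$ together with the two edges of $S$ adjacent to the arch, show that these images fall in $L(Q)\cap\{x<x(p)\}$, and use convexity of the affine pieces to extend the conclusion to all of $L(S)$. The non-differentiability of $L$ along $x=0$ is confined to a one-dimensional set that is disjoint from $p$, so it does not disqualify $L$ from membership in $\mathfrak{F}^{1}(Q)$. Once $(\tilde{T})$ is established at the target parameters, the same continuity argument used above extends it to a neighborhood, and Theorem~\ref{thm2} then completes the proof.
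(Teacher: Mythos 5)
Your proposal follows exactly the route the paper takes: exhibit a trapping quadrilateral $Q$, confirm $L\in\mathfrak{F}^{1}(Q)$ (noting the non-differentiability only along the fold line), verify the hypotheses of Theorem~\ref{thm2} including property $(\tilde{T})$, and conclude via that theorem. In fact the paper's own proof is a two-line assertion that these verifications ``can readily be verified,'' so your outline --- with the correct characteristic polynomial $\lambda^{2}+\alpha\lambda-\beta=0$, the eigenvalue bounds, and the fold-line/convexity argument for the polygonal image --- supplies strictly more detail than the published argument while taking the same approach.
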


\begin{proof}
It can readily be proved that (9) is for the triangle (a degenerate
quadrilateral) $Q$, with vertices $A$, $B$, and $C$ defined above, a member of
$\mathfrak{F}^{1}(Q)$ (failing to be differentiable only along the $y$-axis)
that satisfies the hypotheses of Theorem \ref{thm2} for a sufficiently small
neighborhood of $(\alpha,\beta)=(1.7,0.5)$. We show this, omitting some
routine detail,s in a manner analogous that used to prove Theorem \ref{thm3}.
We begin by showing that the map satisfies property $(\tilde{T})$, which
requires demonstrating that a properly chosen keystone set is mapped to the
left of the fixed point $p$. The first step is to compute the extremes of the
intersection of $L(Q)$ with the $x$-axis. It follows from the definition of
the map $L$ that this can be done by computing the images of the endpoints of
the vertical line corresponding to $x=0$ in $Q$. As in the proof of Theorem
\ref{thm3}, we denote the upper and lower endpoints by $a_{+}$ and $a_{-}$,
respectively. Since the endpoint $a_{+}$ is the intersection point of the
$y$-axis and the edge $AB=(1-t)A+tB$ $(0\leq t\leq1)$, we need to solve the
following equations for $t_{+}$ and then $y:=y(a_{+})$:%
\begin{align*}
(1-t_{+})(-1.28392467)+t_{+}(1.3434581)  &  =0\Longrightarrow t_{+}%
=\frac{1.28392467}{2.62740977},\;(1-t_{+})(0.671742549)+t_{+}(0)=\\
\frac{0.902405167}{2.62740977}  &  =y\approx0.3434581\Longrightarrow
a_{+}\approx(0,1.3434581).
\end{align*}
Analogously, $a_{-}$ is the point of intersection of the $y$-axis and the edge
$CB=(1-t)C+tB$ $(0\leq t\leq1)$, which can be determined by solving%
\begin{align*}
(1-t_{-})(-0.51092939)+t_{-}(1.3434851)  &  =0\Longrightarrow t_{-}%
=\frac{0.51092939}{1.85441449},\;(1-t_{-})(-0.641962335)+t_{-}(0)=\\
-\frac{0.862466831}{1.85441449}  &  =y(a_{-})\approx-0.46509\Longrightarrow
a_{-}\approx(0,-0.46509).
\end{align*}
Consequently, we compute that $L(a_{+})\approx(1.34349,0)$ and $L(a_{-}%
)\approx(0.53491,0)$, and a closer examination of the computations shows that
the intersection of $L(Q)$ with the $x$-axis, comprising the `centerline' of
the pointed arch of the horseshoe, is a closed interval contained in
$(0.53491,1.34349)\times\{0\}$. Accordingly we may choose the keystone $K$ so
that it is completely contained in the subset $Q_{K}$ of $Q$ consisting of
points $(x,y)$ with $x\geq x_{K}:=0.5$. It is readily deduced from (\ref{eq9})
and the definition of the trapping triangle $Q$, that in order to show that
$L(Q_{K})$ lies to the left of the saddle point $p$, it suffices to show that
he intersection point of the vertical line $x=x_{K}$ with the edge $AB$, which
we denote by $z$, is mapped to the left of $p$ by $L$.
The value of $z$ can be obtained by solving the following equations:%
\begin{align*}
(1-t_{z})(-1.28392467)+t_{z}(1.3434851)  &  =0.5\Longrightarrow t_{z}%
=\frac{1.78392467}{2.62740977},\;(1-t_{z})(0.671742549)+t_{z}(0)=\\
\frac{0.566604831}{2.62740977}  &  =y(z)\approx0.2157\Longrightarrow
z\approx(0.5,0.2157).
\end{align*}
Hence, $L(z)\approx\left(  1-1.7\left\vert 0.5\right\vert
+0.2157,0.5(0.5)\right)  =(0.3657,0.195)$. And a more careful examination of
the calculations shows that the $x$-coordinate of $L(z)$ is strictly less than
$0.37<0.45<x(p)$, which proves that property $(\tilde{T})$ is satisfied$.$
We shall next verify that $L$ is expanding along the length of the horseshoe
and that it is contracting in a transverse direction. Rather than taking a
geometric approach analogous to that used in the proof of Theorem \ref{thm3},
we shall use a method that takes advantage of the fact that the derivative of
$L$, where it exists, is piecewise constant. Note that, in terms of the
standard matrix representation, we have
\[
L^{\prime}(x,y)=\left\{
\begin{array}
[c]{cc}%
\mathcal{D}_{-}, & x<0\\
\mathcal{D}_{+}, & x>0
\end{array}
\right.  ,
\]
where
\[
\mathfrak{D}_{-}:=\left(
\begin{array}
[c]{cc}%
1.7 & 1\\
0.5 & 0
\end{array}
\right)  \text{ and }\mathfrak{D}_{+}:=\left(
\begin{array}
[c]{cc}%
-1.7 & 1\\
0.5 & 0
\end{array}
\right)  .
\]
Consequently, the scale factor of the map $L$ along any direction specified by
the unit vector $\boldsymbol{\hat{u}}(\theta):=(\cos\theta,\sin\theta)$ is
given either by
\[
\sigma_{-}(\theta):=\left\vert \left\langle \mathfrak{D}_{-}\boldsymbol{\hat
{u}}(\theta),\boldsymbol{\hat{u}}(\theta)\right\rangle \right\vert
=1.7\cos^{2}\theta+0.75\sin2\theta\text{ or }\sigma_{-}(\theta):=\left\vert
\left\langle \mathfrak{D}_{-}\boldsymbol{\hat{u}}(\theta),\boldsymbol{\hat{u}%
}(\theta)\right\rangle \right\vert =-1.7\cos^{2}\theta+0.75\sin2\theta.
\]
To verify the expanding nature of the map along the horseshoe, we consider the
(singular) foliation of $Q$ by the rays emanating for $B$ that are contained
in the triangle between the two extremes, $AB$ and $CB$, with the slopes
linearly increasing from the upper to the lower edge. The corresponding range
of angles (in degrees) is from
\[
\tan\theta_{AB}=\frac{\Delta y}{\Delta x}=\frac{-0.671742549}{2.62740977}%
\approx-0.255667219\Longrightarrow\theta_{AB}\approx-14.34144088^{o}%
\]
to%
\[
\tan\theta_{CB}=\frac{\Delta y}{\Delta x}=\frac{0.641962335}{1.85441449}%
\approx0.346180607\Longrightarrow\theta_{CB}\approx19.09486124^{o}.
\]
Whence, we find for the interval $\left[  -14.34144088^{o}
,19.09486124^{o}\right] $ that
\[
\min\sigma_{-}(\theta)\geq1.231\text{ and }\min\sigma_{-}(\theta)\geq1.054.
\]
As both of these minima exceed one, and we can ignore the behavior along $x=0$
owing to the fact that it has Lebesgue measure zero, the map is expanding
along the horseshoe.
It remains to prove that the map is contracting transverse to the horseshoe.
The vertical direction should be avoided because $L$ maps small vertical line
segments to the left and right of the $y$-axis into horizontal line segments.
However, any near vertical (singular) foliation can be used. In this regards,
a simple calculation along the lines of those above shows that
\[
\max\sigma_{-}(\theta),\max\sigma_{-}(\theta)\leq1/2
\]
whenever $75^{o}\leq\theta\leq105^{0}$, which verifies the transverse
contraction of the map $L.$
Thus, it follows from Theorem \ref{thm2} that the theorem is proved for
$(\alpha,\beta)=(1.7,0.5)$. Therefore, owing to the local continuous dependence
on of all the key features used in the proof such as the position and type
of the fixed point $p$ and the vertices of the trapping triangle $Q$, the
desired result follows, which completes the proof.
\end{proof}

The above theorems provide a great deal of information about the long-term
dynamics of the H\'{e}non and Lozi maps, which when combined with
Cvitanovi\'{c}'s pruning techniques and kneading theory such as in
\cite{Cvit1,Cvit2,Ish} ought to reveal a great deal about the nature of the
iterates of the mappings. In this regard, see also \cite{DN}. We note that it
appears that the above theorems for the H\'{e}non and Lozi maps can also be
proved using rank one theory, possibly with minor modifications. However, the
rather obvious extensions to higher dimensional maps of these results, which
can be proved using an analogous combination of geometric and analytic tools,
\ are not directly amenable to rank one theory verification.

\subsection{Higher dimensional H\'{e}non and Lozi maps}

Three-dimensional extensions of the H\'{e}non and Lozi maps can be easily
obtained by the inductive process developed in Section 3. In particular, the
extensions to $\mathbb{R}^{3}$ are%
\begin{equation}
H_{3}(x,y,z):=\left(  1-ax^{2}+y,\bar{b}z,1-az^{2}+bx\right)  , \label{eq10}%
\end{equation}
and%

\begin{equation}
L_{3}(x,y,z):=\left(  1-\alpha\left\vert x\right\vert +y,\bar{\beta}%
z,1-\alpha\left\vert z\right\vert +\beta x\right)  ,\label{eq11}%
\end{equation}
where typically we have to choose $0<b/a,\bar{b}/a$ and $0<\beta/\alpha
,\bar{\beta}/\alpha$ considerably smaller than the corresponding ratio in
Theorem \ref{thm3} and Theorem \ref{thm4}, respectively, to compensate for the
lack of symmetry in the trapping regions, which would be unnecessary for the
GAH paradigm shown in Fig. 4. We note that it is interesting to compare
(\ref{eq11}) with the results in \cite{GMO}. Examples of attractors for
three-dimensional H\'{e}non and Lozi maps are shown in Fig.6 and
Fig.7, respectively. These attractors in $\mathbb{R}^{3}$ are
automatically CSAs, and one can infer from the figures that each has a
Hausdorff dimension somewhat greater than two.

\begin{figure}
\centering
\includegraphics[width = 0.9\textwidth]{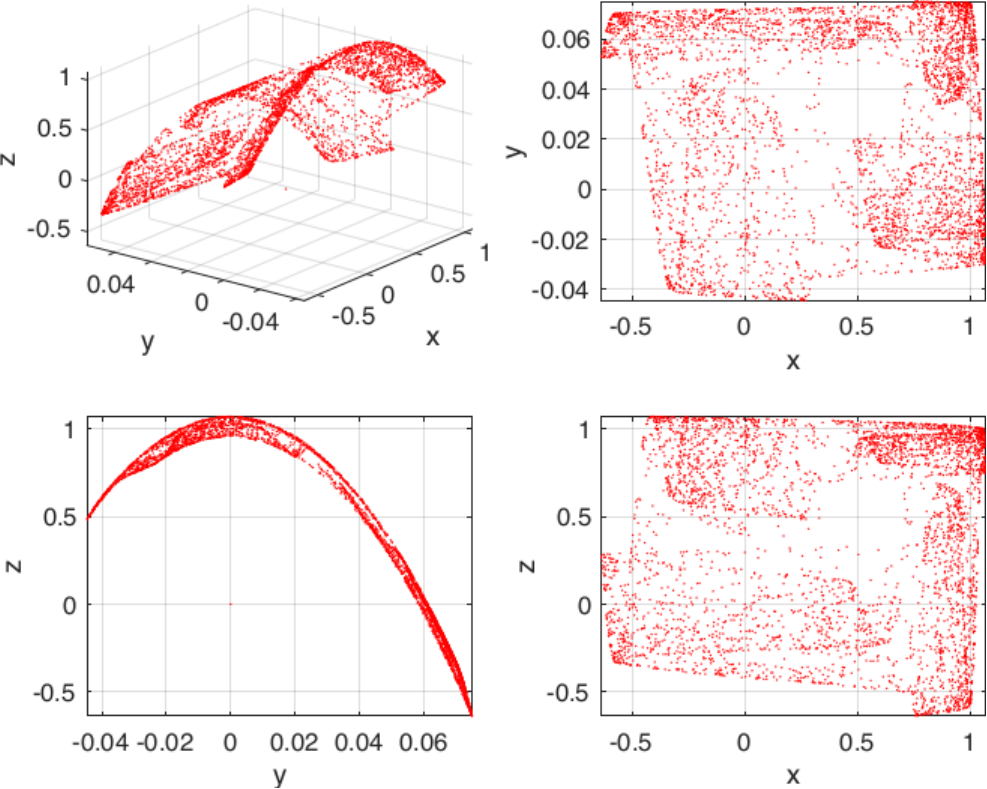}
\caption{Local Attractor for the map (\ref{eq10}) for $a=1.4$, $b=\bar{b}=0.09$.
Clockwise: full 3D; projections in $x,y$; $x,z$ and $y,z$ planes.}
\end{figure}


\begin{figure}
\centering
\includegraphics[width = 0.9\textwidth]{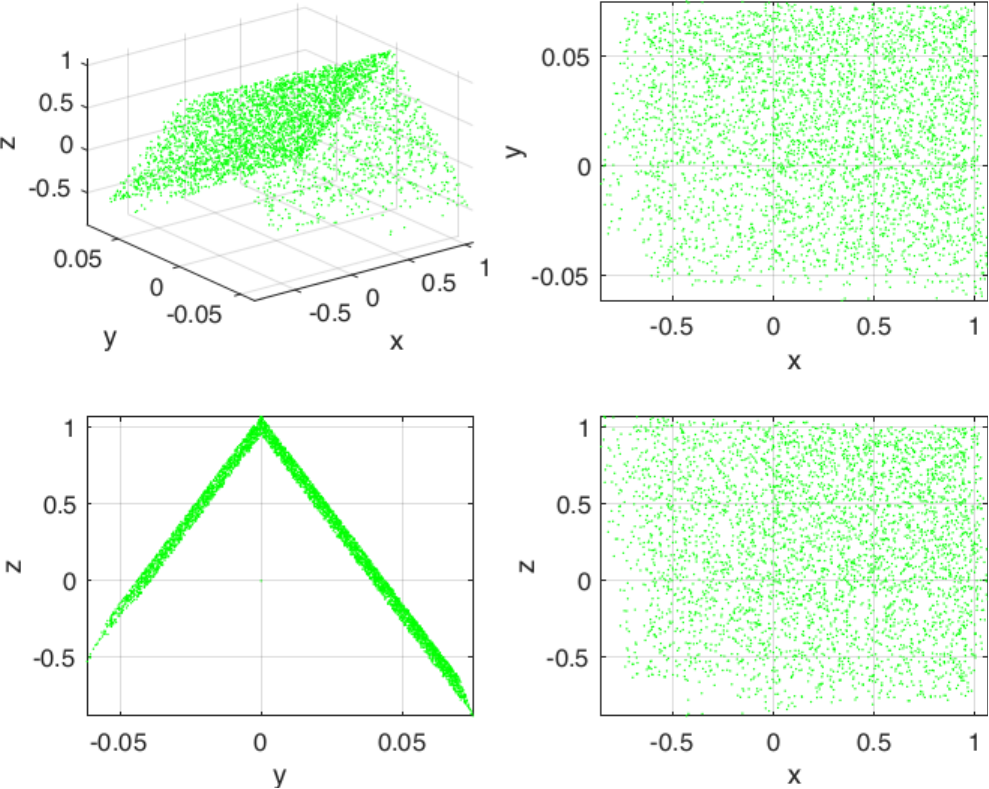}
\caption{Local Attractor for the map (\ref{eq11}) for $\alpha=1.7$, $\beta=\bar{\beta}=0.08$.
Clockwise: full 3D; projections in $x,y$; $x,z$ and $y,z$ planes.}
\end{figure}


\section{Concluding Remarks}

We have introduced GAH dynamical models proved that they have CSAs that are
closures of unstable manifolds of any finite dimension. Moreover, by showing
that they include the H\'{e}non and Lozi maps of the plane, we were able to
give comparatively simple proofs that they possess CSAs for certain parameter
values, and in the process resolved an apparently open question of rather long
standing about the existence of a CSA for the H\'{e}non map with parameters
$a=1.4$ and $b=0.3$. The models presented are those having 1-dimensional
stable manifolds for their key saddle points, so it is natural to also
consider models having higher dimensional stable manifolds for these points,
which we plan to do in the near future. In addition, along the lines in
\cite{BY,CL}, we intend to construct SRB measures for the GAH paradigms,
thereby enabling a deeper statistical study of their CSAs. Finally, we have
observed that our CSA constructions appear to have some important applications
in granular flow problems, and we intend to seek out additional areas of
science and engineering in which GAHs may be useful.

\section*{Acknowledgment}

\noindent Y. Joshi would like to thank a CUNY grant and his department for
support of his work on this paper, D. Blackmore is indebted to NSF Grant CMMI
1029809 for partial support of his initial efforts in this collaboration, and
A. Rahman appreciates the support of the DMS at NJIT for his participation.
Thanks are also due to Marian Gidea for insights derived from discussions
about chaotic strange attractors.

\qquad

\end{document}